\documentclass[11pt]{article}
\usepackage{amssymb}
\usepackage{amsmath}
\pagestyle{plain}
\input{epsf.sty}  
\setlength{\textheight}{8.6in}       % altezza del testo
\setlength{\textwidth}{5.5in}        % larghezza del testo
\setlength{\evensidemargin}{0.1in}  % margine sinistro per pagine pari
\setlength{\oddsidemargin}{0.1in}   % margine sinistro per pagine dispari
\setlength{\headsep}{10pt}          % spazio tra testa e corpo
\setlength{\topmargin}{0.0in}       % margine superiore

\newtheorem{proposition}{Proposition}
\newenvironment{proof}{\begin{trivlist}
\item[\hspace{\labelsep}{\bf\noindent Proof. }]}
{$\Box$\end{trivlist}}
\date{\empty}

\linespread{1.1}                        % comando di interlinea
\title{\textbf{\huge On a bilateral birth-death process\\
 with alternating rates\footnote{This paper is dedicated to the memory of Professor Luigi Maria Ricciardi, 
 who passed away in Naples on May 7, 2011.} 
 \footnote{Paper  published in Ricerche di Matematica, June 2012, Volume 61, Issue 1, pp 157-169.
 The final publication is available at http://link.springer.com/article/10.1007/s11587-011-0122-0.
 }}}

\author{\textbf{Antonio Di Crescenzo\footnote{Corresponding author; 
        tel.: +39-089-963349; fax: +39-089-963303}, Antonella Iuliano, Barbara Martinucci}\\
        Dipartimento di Matematica, Universit\`a di Salerno\\ 
        %Via Ponte don Melillo\\
        84084 Fisciano (SA), Italy\\       
        {\small Email: \{adicrescenzo, aiuliano, bmartinucci\}@unisa.it}
        }
\begin{document} 

\maketitle

\begin{abstract}
We consider a bilateral birth-death process characterized by a constant transition rate $\lambda$ 
from even states and a possibly different transition rate $\mu$ from odd states. We determine 
the probability generating functions of the even and odd states, the transition probabilities, 
mean and variance of the process for arbitrary initial state. 
Some features of the birth-death process confined to the non-negative integers by a reflecting 
boundary in the zero-state are also analyzed. In particular, making use of a Laplace 
transform approach we obtain a series form of the transition probability from state 1 to the 
zero-state. \\

\noindent
{\bf Keywords} \quad Birth-death processes $\cdot$ Alternating rates $\cdot$ 
Probability generating functions $\cdot$ Transition probabilities $\cdot$ Symmetry \\

\noindent
{\bf Mathematics Subject Classification (2010)} \quad 
60J80 % Branching processes (Galton-Watson, birth-and-death, etc.)
$\cdot$ 60J85 % Applications of branching processes

\end{abstract}

%
% =================================================================================
\section{Introduction}
% =================================================================================
Birth-death processes were introduced to describe random growth (see, for instance, 
Ricciardi \cite{Ri86} for an accurate description of birth-death processes in the context of population 
dynamics). Furthermore, they arise as natural descriptors of time-varying phenomena in several 
applied fields such as queueing, epidemiology, epidemics, optics, neurophysiology, etc. 
An extensive survey has been provided in Parthasarathy and Lenin \cite{PaLe2004}. In particular, 
in Section 9 of that paper certain birth-death processes are used to describe the time changes 
in the concentrations of the components of a chemical reaction, and   their role  
in the study of diatomic molecular chains is emphasized. 
\par
Moreover,  Stockmayer et al.\ \cite{StGoNo1971} gave an 
example of application of stochastic processes in the study of chain molecular diffusion, 
by modeling a molecule as a freely-joined chain of two regularly alternating kinds of atoms. 
The two kinds of atoms have alternating jump rates, and these rates are reversed for odd labeled 
beads. By invoking the master equations for even and odd numbered bonds, the authors obtained 
the exact time-dependent average length of bond vectors. 
\par
Inspired by this work, Conolly et al.\ \cite{Coetal1997} studied an infinitely long chain of atoms 
joined by links of equal length. The links are assumed to be subject to random shocks, that force 
the atoms to move and the molecule to diffuse. The shock mechanism is different according to whether the 
atom occupies an odd or an even position on the chain. The originating stochastic model is a randomized 
random walk on the integers with an unusual exponential pattern for the inter-step time intervals. 
The authors analyze some features of this process and investigate also its queue counterpart, where 
the walk is confined to the non negative integers. Various results concerning this queueing system with 
``chemical'' rules (the so-called ``chemical queue'') were obtained also by Tarabia and 
El-Baz \cite{TaEl2002a}, \cite{TaEl2002b}  and more recently by Tarabia  et al.\ \cite{Taetal2009}. 
\par
Another example arising in a chemical context where the role of parity is crucial is provided in 
Lente \cite{Le2010}, 
where the probability of a more stable enantiomer is different according on whether the number 
of chiral molecules is even or odd. 
\par
Stimulated by the above researches, in this paper we consider a birth-death process $N(t)$ 
on the integers with a transition rate $\lambda$ from even states and a possibly different rate 
$\mu$ from odd states. This model arises by suitably modifying the death rates of the process 
considered in the above papers. A detailed description of the model is performed in Section 2, 
where the probability generating functions of even and odd states and the transition 
probabilities of the process are obtained for arbitrary initial state.  
Certain symmetry properties of the transition probabilities are also given. 
In Section 3 we study the birth-death process obtained by superimposing 
a reflecting boundary in the zero-state. In particular, by making use of a Laplace transform approach, 
we obtain the probability of a transition from state 1  to the zero-state. 
Formulas for mean and variance of both processes are also provided.
We remark that some preliminary results on the process under investigation 
are given in Iuliano and Martinucci \cite{IuMa2010} for the case of zero initial state. 
\par 
It should be mentioned that closed-form results on bilateral birth-death 
processes have been obtained in the past only in few solvable cases, such as those in the above 
mentioned papers, and those given in Di Crescenzo \cite{DiCr1994}, 
Di Crescenzo and Martinucci \cite{DiCrMa2009}, Pollett \cite{Po2001}.  
%==================================================================================
\section{Transient distribution}
% =================================================================================
%
%
\begin{figure}
\begin{center}
\begin{picture}(341,91) 
\put(39.25,45.25){\circle{38}} 
\put(99.25,45.25){\circle{38}} 
\put(159.25,45.25){\circle{38}} 
\put(219.25,45.25){\circle{38}} 
\put(279.25,45.25){\circle{38}} 
\put(9.25,64){\oval(45,10)[rt]} 
\put(69.25,64){\oval(45,10)[t]} 
\put(129.25,64){\oval(45,10)[t]} 
\put(189.25,64){\oval(45,10)[t]} 
\put(249.25,64){\oval(45,10)[t]} 
\put(309.25,64){\oval(45,10)[lt]} 
\put(9.25,26.3){\oval(45,10)[rb]} 
\put(69.25,26.3){\oval(45,10)[b]} 
\put(129.25,26.3){\oval(45,10)[b]} 
\put(189.25,26.3){\oval(45,10)[b]} 
\put(249.25,26.3){\oval(45,10)[b]} 
\put(309.25,26.3){\oval(45,10)[lb]} 
\put(19,35){\makebox(40,15)[t]{\Large $-2$}} 
\put(79,35){\makebox(40,15)[t]{\Large $-1$}} 
\put(140,35){\makebox(40,15)[t]{\Large $0$}} 
\put(200,35){\makebox(40,15)[t]{\Large $1$}} 
\put(260,35){\makebox(40,15)[t]{\Large $2$}} 
\put(5,69){\vector(1,0){10}} 
\put(65,69){\vector(1,0){10}} 
\put(125,69){\vector(1,0){10}} 
\put(185,69){\vector(1,0){10}} 
\put(245,69){\vector(1,0){10}} 
\put(305,69){\vector(1,0){10}} 
\put(15,21.3){\vector(-1,0){10}} 
\put(75,21.3){\vector(-1,0){10}} 
\put(135,21.3){\vector(-1,0){10}} 
\put(195,21.3){\vector(-1,0){10}} 
\put(255,21.3){\vector(-1,0){10}} 
\put(315,21.3){\vector(-1,0){10}} 
\put(35,70){\makebox(70,15)[t]{$\lambda$}} 
\put(105,70){\makebox(50,15)[t]{$\mu$}} 
\put(155,70){\makebox(70,15)[t]{$\lambda$}} 
\put(225,70){\makebox(50,15)[t]{$\mu$}} 
\put(45,-2){\makebox(50,15)[t]{$\mu$}} 
\put(95,-2){\makebox(70,15)[t]{$\lambda$}} 
\put(165,-2){\makebox(50,15)[t]{$\mu$}} 
\put(215,-2){\makebox(70,15)[t]{$\lambda$}} 
\put(3,69){\line(-1,0){1}} 
\put(3,21.3){\line(-1,0){1}} 
\put(0,69){\line(-1,0){1}} 
\put(0,21.3){\line(-1,0){1}} 
\put(-3,69){\line(-1,0){1}} 
\put(-3,21.3){\line(-1,0){1}} 
\put(317.5,69){\line(1,0){1}} 
\put(317.5,21.3){\line(1,0){1}} 
\put(320.5,69){\line(1,0){1}} 
\put(320.5,21.3){\line(1,0){1}} 
\put(323.5,69){\line(1,0){1}} 
\put(323.5,21.3){\line(1,0){1}} 
\end{picture} 
\end{center}
\vspace{-0.7cm}
\caption{Transition rate diagram of $N(t)$.}
\end{figure}
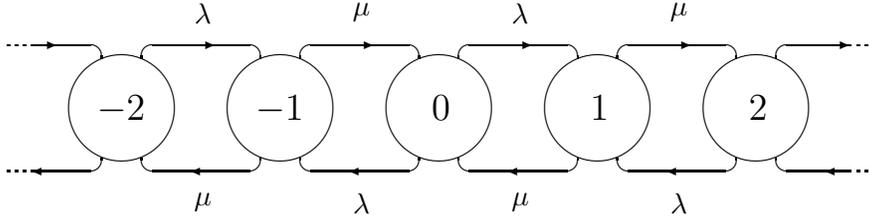
\par 
We consider a birth-death process $\{N(t);\,t\geq 0\}$ with state-space $\mathbb{Z}$, and denote by 
$$
p_{k,n}(t)=P\{{N}(t)={n}\,|\,{N}(0)={k}\},\quad t\geq 0,\quad n\in \mathbb{Z}
$$
its transition probabilities, where $k\in \mathbb{Z}$ is the initial state. 
We assume that $N(t)$ is characterized by a   
transition rate $\lambda$ from any even state to the two neighboring states, and by a 
possibly different transition rate $\mu$ from any odd state to the neighboring states. 
In other terms, denoting by 
$$
 \nu_{j,n}=\lim_{h\to 0}\frac{1}{h}\,P\{N(t+h)=n\,|\,N(t)=j\}
$$
the time-homogeneous transition rates of $N(t)$ from state $j$ to state $n$, we assume that  
the allowed transitions are characterized by the following rates:
\begin{equation}
\nu_{2n,2n\pm 1}=\lambda, 
\qquad 
\nu_{2n\pm 1,2n}=\mu,
\qquad \forall n\in \mathbb{Z},
\label{rates}
\end{equation}
with $\lambda,\mu >0$. 
The associated transition rate diagram of this process is given in Figure 1.
We note that rates (\ref{rates}) are different from those of the birth-death model 
considered in Conolly et al.\ \cite{Coetal1997} and Tarabia  et al.\ \cite{Taetal2009}, where 
$\nu_{2n,2n+ 1}=\nu_{2n+ 1,2n}=\lambda$ and $\nu_{2n- 1,2n}=\nu_{2n,2n- 1}=\mu$ 
for any $n\in \mathbb{Z}$. 
\par
Due to assumptions (\ref{rates}), the transition probabilities of $N(t)$ satisfy the following system of 
differential-difference equations:
\begin{equation}
\left\{ \begin{array}{l}
\displaystyle{\frac{{\rm d}}{{\rm d}t} p_{k,2n}(t)=\mu\, p_{k,2 n-1}(t)-2{\lambda}\, p_{k,2 n}(t)+\mu\, p_{k,2 n+1}(t)}, \\
\\
\displaystyle{\frac{{\rm d}}{{\rm d}t} p_{k,2 n+1}(t)=\lambda\, p_{k,2 n}(t)-2{\mu}\, p_{k,2 n+1}(t)+ \lambda\, p_{k,2  n+2}(t)},
\end{array} \right.
\label{eq1}
\end{equation}
for any $t\geq 0$, $n\in \mathbb{Z}$ and for any initial state $k\in \mathbb{Z}$. 
The initial condition is expressed by:
\begin{equation}
p_{k,n}(0)=\delta_{n,k},
\label{condizioneiniziale}
\end{equation}
where $\delta_{n,k}$ is the Kronecker's delta. We notice that in the special case when $\lambda=\mu$ 
process $N(t)$ identifies with the so-called ``randomized random walk'' (see Conolly \cite{Co1971}).
\par
In order to obtain the state probabilities of $N(t)$, hereafter we develop a probability 
generating function-based approach. We recall that this method has been used  
in the past to determine probabilities of interest in several stochastic models 
(see, for instance, Giorno and Nobile \cite{GiNo88} and  Ricciardi and Sato \cite{RiSa87} 
for the distribution of the range of one-dimensional random walks). 
Let us define the probability generating 
functions of the sets of even and odd states of $N(t)$, respectively:
\begin{eqnarray}
&& \hspace*{-1.2cm}
F_{k}(z,t):=\sum_{j=-\infty}^{+\infty} z^{2 j} p_{k,2 j}(t),
\qquad 
G_{k}(z,t):=\sum_{j=-\infty}^{+\infty} z^{2 j+1} p_{k,2 j+1}(t),
\label{defFG}
\end{eqnarray}
with $z \in \mathbb{Z}$. Note that, due to (\ref{condizioneiniziale}), the following initial conditions hold:
\begin{equation}
F_{k}(z,0)=\left\{ \begin{array}{ll}
\displaystyle{z^{k}} & {k\ \rm even} \\
\displaystyle{0} & {k\ \rm odd},
\end{array} \right.\ 
\qquad
G_{k}(z,0)=\left\{ \begin{array}{ll}
\displaystyle{0} & {k\ \rm even} \\
\displaystyle{z^{k}} & {k\ \rm odd}.
\end{array} \right.
\label{eq2}
\end{equation}
From system (\ref{eq1}) we have that the generating functions (\ref{defFG}) satisfy the following differential system:
$$
\left\{ \begin{array}{l}
\displaystyle{\frac{{\partial}}{{\partial} t} F_{k}(z,t)=\mu\, z G_{k}(z,t)-2{\lambda}\, F_{k}(z,t)+\frac{\mu}{z}\, G_{k}(z,t)}, \\
 \\
\displaystyle{\frac{{\partial}}{{\partial} t} G_{k}(z,t)=\lambda\, z F_{k}(z,t)-2{\mu}\, G_{k}(z,t)+\frac{\lambda}{z}\, F_{k}(z,t)},
\end{array} \right.
$$
so that 
\begin{displaymath}
\frac{{\partial}}{{\partial} t}
\left(\begin{array}{c}
F_{k}(z,t)  \\
G_{k}(z,t) 
\end{array} \right) =
A\cdot \left(\begin{array}{c}
F_{k}(z,t)  \\
G_{k}(z,t)  
\end{array} \right),\ \
%\begin{flushright}
%\end{flushright}
%\end{displaymath}
\qquad
%\begin{displaymath}
A :=
\left( \begin{array}{cc}
\displaystyle{-2 \lambda}  & \displaystyle{\mu \frac{z^2+1}{z}} \\
\displaystyle{\lambda \frac{z^2+1}{z}} & \displaystyle{-2 \mu} \\
\end{array} \right).
\end{displaymath}
Hence, by use of standard methods, due to conditions (\ref{eq2}) we come to 
\begin{equation}
\left(\begin{array}{c} 
\! F_{2k}(z,t)\! \\
\! G_{2k}(z,t) \!
\end{array} \right) 
= {\rm e}^{A t}
\cdot 
\left(\begin{array}{c}
z^{2k}  \\
0  
\end{array} \right),
\label{matriceA}
\end{equation}
and
\begin{equation}
\left(\begin{array}{c} 
\! F_{2k+1}(z,t)\! \\
\! G_{2k+1}(z,t) \!
\end{array} \right) 
= {\rm e}^{A t}
\cdot 
\left(\begin{array}{c}
0 \\
z^{2k+1} 
\end{array} \right),
\label{matriceAA}
\end{equation}
where
$$
{\rm e}^{A t}
= \exp \left\{ \left( \begin{array}{cc}
\displaystyle{-2 \lambda}  & \displaystyle{\mu \frac{z^2+1}{z}} \\
\displaystyle{\lambda \frac{z^2+1}{z}} & \displaystyle{-2 \mu} \\
\end{array} \right)\,t \right\}. 
$$
\par
\noindent
By straightforward calculations we have $A=S\cdot V\cdot S^{-1}$, where
\begin{equation}
S =
\left( \begin{array}{cc}
\displaystyle{\mu-\lambda-\frac{h(z)}{z}} & \displaystyle{\mu-\lambda+ \frac{h(z)}{z}}  \\
& \\
\displaystyle{\lambda \frac{z^2+1}{z}} & \displaystyle{\lambda \frac{z^2+1}{z}}  \\
\end{array} \right),
\qquad
V =
\left( \begin{array}{cc}
v_1 & 0  \\
& \\
0 & v_2 \\
\end{array} \right),
\label{matriceS}
\end{equation}
for $v_1= -(\lambda+\mu)-  h(z)/z$ and 
$v_2= -(\lambda+\mu)+  h(z)/z$, and where 
\begin{eqnarray}
&& \hspace*{-1.2cm}
S^{-1} =
-\frac{z}{2 \lambda (z^2+1) h(z)} 
%\nonumber
%\\
%&& \hspace*{-0.5cm}
%\times
\left( \begin{array}{cc}
\lambda (z^2+1) & z(\lambda-\mu)-h(z) \\
-\lambda (z^2+1) & z(\mu-\lambda)-h(z) 
\end{array} \right),
\label{matriceSinv}
\end{eqnarray}
with 
$$
 h(z):=\sqrt{(\mu z^2+\lambda) (\lambda z^2+\mu)}.
$$
If the initial state is even  ($2k$), Eqs.\ (\ref{matriceA}) and (\ref{matriceS})--(\ref{matriceSinv}) give
\begin{eqnarray*}
&& \hspace*{-0.8cm}
{\rm e}^{A t}
\cdot 
\left(\begin{array}{c}
z^{2k} \\
0  
\end{array} \right) 
=\left( \begin{array}{cc}
\displaystyle{\mu-\lambda- \frac{h(z)}{z}} & \displaystyle{\mu -\lambda+ \frac{h(z)}{z}} \\
& \\
\displaystyle{\lambda \frac{(z^2+1)}{z}} & \displaystyle{\lambda \frac{(z^2+1)}{z}}  \\
\end{array} \right)
%\\
%&& \hspace*{1.05cm}
%\times 
\left( \begin{array}{cc}
\displaystyle{{\rm e}^{v_1 t}} & \displaystyle{0}  \\
& \\
\displaystyle{0} & \displaystyle{{\rm e}^{v_2 t}}  \\
\end{array} \right)
\left( \begin{array}{c}
\displaystyle{-\frac{z^{2k+1}}{2 h(z)}} \\
\\
\displaystyle{\frac{z^{2k+1}}{2 h(z)}}  \\
\end{array} \right),
\end{eqnarray*}
and then 
\begin{eqnarray}
&& \hspace*{-0.8cm}
{\rm e}^{A t}
\cdot 
\left(\begin{array}{c}
z^{2k} \\
0  
\end{array} \right) 
={\rm e}^{-(\lambda+\mu) t}\cdot{\frac{z^{2k}}{h(z)}}
\nonumber
%&& \hspace*{-0.8cm}
\left( \begin{array}{c}
\displaystyle{{h(z)}\cosh[t\,\frac{h(z)}{z}]+z (\mu-\lambda) \sinh[t\,\frac{h(z)}{z}]}, \\
\\
\displaystyle{\lambda ({z^2+1}) \sinh[t\,\frac{h(z)}{z}]}  \\
\end{array} \right).
\nonumber
\\
&& \hspace*{0.8cm}
\label{FGsol}
\end{eqnarray}
Hence, from Eqs.\ (\ref{matriceA}) and (\ref{FGsol}) we obtain the explicit expression of the 
probability generating functions when the initial state is even:
\begin{eqnarray}
&& \hspace*{-1.5cm}
F_{2k}(z,t)={\rm e}^{-(\lambda+\mu) t}\frac{z^{2k}}{h(z)}
\left\{h(z) \cosh\left[\frac{t\,h(z)}{z}\right]
+z (\mu-\lambda) \sinh\left[\frac{t\,h(z)}{z}\right] \right\},
\label{Fsoluzione}
\\
\nonumber
\\
&& \hspace*{-1.5cm}
G_{2k}(z,t)={\rm e}^{-(\lambda+\mu) t}\frac{z^{2k}}{h(z)}
\lambda \left({z^2+1}\right) \sinh\left[\frac{t\,h(z)}{z}\right].
\label{Gsoluzione}
\end{eqnarray}
Similarly, if the initial state is odd  ($2k+1$) the explicit expression of the 
probability generating functions is:
\begin{eqnarray}
&& \hspace*{-1.cm}
F_{2k+1}(z,t)={\rm e}^{-(\lambda+\mu) t}\frac{z^{2k+1}}{h(z)}
\mu \left({z^2+1}\right) \sinh\left[\frac{t\,h(z)}{z}\right],
\label{FFsoluzione}
\\
\nonumber
\\
&& \hspace*{-1.cm}
G_{2k+1}(z,t)={\rm e}^{-(\lambda+\mu) t}\frac{z^{2k+1}}{h(z)}
\left\{h(z) \cosh\left[\frac{t\,h(z)}{z}\right]
+z (\lambda-\mu) \sinh\left[\frac{t\,h(z)}{z}\right] \right\}.
\nonumber
\\
\label{GGsoluzione}
\end{eqnarray}
We are now able to provide the state probabilities. 
\begin{proposition}\label{propProb}
For all $l,r \in \mathbb{Z}$ and $t \geq 0$ the transition probabilities of $N(t)$ are:
\begin{eqnarray}
&& \hspace*{-0.8cm}
p_{2l,2r}(t)={\rm e}^{-(\lambda+\mu) t} \sum_{n = {\left|r-l\right|}}^{+\infty}
\left[\frac{(\lambda t)^{2 n}}{(2 n)!}+\left(\frac{\mu-\lambda}{\lambda}\right) \frac{(\lambda t)^{2 n+1}}{(2 n+1)!} \right]
\nonumber
\\
&& \hspace*{0.6cm}
\times \sum_{k=0}^{n-{\left|r-l\right|}} {n \choose k} {n \choose k + {\left|r-l\right|}} \left(\frac{\lambda}{\mu}\right)^{-2 k - {\left|r-l\right|}},
\label{probparipari}
\\
&& \hspace*{-0.8cm}
\label{probpari1}
p_{2 l,2r+1}(t) = {\rm e}^{-(\lambda+\mu) t} \left\{ \sum_{n = {\left|r-l\right|}}^{+\infty} \frac{(\lambda t)^{2 n+1}}{(2 n+1)!}
\sum_{k=0}^{n-{\left|r-l\right|}} {n \choose k} {n \choose k+{\left|r-l\right|}} \left(\frac{\lambda}{\mu}\right)^{-2 k-{\left|r-l\right|}} \right.
\nonumber
\\
&& \hspace*{1cm}
\left. + \sum_{n={\left|r-l+1\right|}}^{+\infty} \frac{(\lambda t)^{2 n+1}}{(2 n+1)!} \sum_{k=0}^{n-{\left|r-l+1\right|}} {n \choose k} {n \choose k+{\left|r-l+1\right|}} \left(\frac{\lambda}{\mu}\right)^{-2 k-{\left|r-l+1\right|}}\right\}.
\nonumber
\\
&& \hspace*{0.8cm}
\label{probpari2}
\end{eqnarray}
\par
\noindent
\begin{eqnarray}
&& \hspace*{-0.8cm}
p_{2 l+1,2r}(t) = {\rm e}^{-(\lambda+\mu) t} \left\{ \sum_{n = {\left|r-l-1\right|}}^{+\infty} \frac{(\mu t)^{2 n+1}}{(2 n+1)!}
\sum_{k=0}^{n-{\left|r-l-1\right|}} {n \choose k} {n \choose k+{\left|r-l-1\right|}} \left(\frac{\mu}{\lambda}\right)^{-2 k-{\left|r-l-1\right|}} \right.
\nonumber
\\
&& \hspace*{1cm}
\left. + \sum_{n={\left|r-l\right|}}^{+\infty} \frac{(\mu t)^{2 n+1}}{(2 n+1)!} \sum_{k=0}^{n-{\left|r-l\right|}} {n \choose k} {n \choose k+{\left|r-l\right|}} \left(\frac{\mu}{\lambda}\right)^{-2 k-{\left|r-l\right|}}\right\}.
\\
&& \hspace*{-0.8cm}
\label{probdispari1}
p_{2l+1,2r+1}(t)={\rm e}^{-(\lambda+\mu) t} \sum_{n = {\left|r-l\right|}}^{+\infty}
\left[\frac{(\mu t)^{2 n}}{(2 n)!}+\left(\frac{\lambda-\mu}{\lambda}\right) \frac{(\mu t)^{2 n+1}}{(2 n+1)!} \right]
\nonumber  \\
&& \hspace*{1.5cm}
%\nonumeber
\times \sum_{k=0}^{n-{\left|r-l\right|}} {n \choose k} {n \choose k + {\left|r-l\right|}} \left(\frac{\mu}{\lambda}\right)^{-2 k - {\left|r-l\right|}}.
%\nonumber
\label{probdispari2}
\end{eqnarray}
\label{proposition11}
\end{proposition}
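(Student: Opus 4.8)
The plan is to obtain all four families of transition probabilities by reading off the appropriate coefficients of the probability generating functions computed in Eqs.\ (\ref{Fsoluzione})--(\ref{GGsoluzione}), using that
$p_{2l,2r}(t)=[z^{2r}]\,F_{2l}(z,t)$, $p_{2l,2r+1}(t)=[z^{2r+1}]\,G_{2l}(z,t)$, $p_{2l+1,2r}(t)=[z^{2r}]\,F_{2l+1}(z,t)$ and $p_{2l+1,2r+1}(t)=[z^{2r+1}]\,G_{2l+1}(z,t)$, where $[z^{m}]$ extracts the coefficient of $z^{m}$ (the generating functions being treated as Laurent series in $z$).

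First I would rewrite Eq.\ (\ref{Fsoluzione}) as $F_{2k}(z,t)={\rm e}^{-(\lambda+\mu)t}z^{2k}\bigl\{\cosh[t\,h(z)/z]+\tfrac{z(\mu-\lambda)}{h(z)}\sinh[t\,h(z)/z]\bigr\}$ and insert the Taylor series of $\cosh$ and $\sinh$. The decisive point is that $\cosh[t\,h(z)/z]=\sum_{n\geq 0}\tfrac{t^{2n}}{(2n)!}\tfrac{h(z)^{2n}}{z^{2n}}$ and $\tfrac{z}{h(z)}\sinh[t\,h(z)/z]=\sum_{n\geq 0}\tfrac{t^{2n+1}}{(2n+1)!}\tfrac{h(z)^{2n}}{z^{2n}}$, so only the even powers of $h(z)$ survive, and by definition $h(z)^{2n}=(\mu z^2+\lambda)^n(\lambda z^2+\mu)^n$ is an honest polynomial in $z$. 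The same manipulation applied to Eq.\ (\ref{Gsoluzione}) gives $G_{2k}(z,t)={\rm e}^{-(\lambda+\mu)t}z^{2k}\,\lambda(z^2+1)\sum_{n\geq 0}\tfrac{t^{2n+1}}{(2n+1)!}\tfrac{h(z)^{2n}}{z^{2n+1}}$, and since $h(z)$ is symmetric in $\lambda,\mu$, Eqs.\ (\ref{FFsoluzione})--(\ref{GGsoluzione}) are just the even-state formulas with $\lambda$ and $\mu$ interchanged and $z^{2k}$ replaced by $z^{2k+1}$.

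Next I would expand $h(z)^{2n}=\sum_{i,j=0}^{n}\binom{n}{i}\binom{n}{j}\mu^{n+i-j}\lambda^{n+j-i}z^{2(i+j)}$ by the binomial theorem. Collecting in $F_{2l}(z,t)$ the coefficient of $z^{2r}$ then means summing the $(i,j)$-terms with $i+j=n+(r-l)$; re-indexing this sum by one parameter $k$ (taking $i=k+|r-l|,\ j=n-k$ when $r\geq l$, and $i=k,\ j=n-|r-l|-k$ when $r<l$) shows it equals $\lambda^{2n}\sum_{k=0}^{n-|r-l|}\binom{n}{k}\binom{n}{k+|r-l|}(\lambda/\mu)^{-2k-|r-l|}$, so the result depends on $r-l$ only through $|r-l|$ and the outer sum automatically starts at $n=|r-l|$. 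Absorbing $\lambda^{2n}$ into $t^{2n}/(2n)!$ and $t^{2n+1}/(2n+1)!$ turns these into $(\lambda t)^{2n}/(2n)!$ and $\tfrac{\mu-\lambda}{\lambda}(\lambda t)^{2n+1}/(2n+1)!$, which is exactly (\ref{probparipari}). For $G_{2l}(z,t)$ the extra factor $(z^2+1)$ splits the coefficient of $z^{2r+1}$ into the two pieces arising from $i+j=n+(r-l)$ and $i+j=n+(r-l)+1$, and the same re-indexing applied to each piece (now with $|r-l|$ and $|r-l+1|$) yields the two sums in (\ref{probpari2}).

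Finally, the odd-initial-state identities follow from the even ones by the symmetry noted above: since $F_{2l+1}(z,t)=z\cdot F_{2l}(z,t)|_{\lambda\leftrightarrow\mu}/z^{2l}\cdot z^{2l}$ is, concretely, $z$ times "$G_{2l}$ with $\lambda$ and $\mu$ interchanged" and $G_{2l+1}(z,t)$ is $z$ times "$F_{2l}$ with $\lambda$ and $\mu$ interchanged", one gets $p_{2l+1,2r}(t)=p_{2l,2r-1}(t)\big|_{\lambda\leftrightarrow\mu}$ and $p_{2l+1,2r+1}(t)=p_{2l,2r}(t)\big|_{\lambda\leftrightarrow\mu}$, whence (\ref{probdispari1}) and (\ref{probdispari2}) (the shift $2r-1=2(r-1)+1$ producing the indices $|r-l-1|$ and $|r-l|$). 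Interchanging the order of summation throughout is legitimate since all series in sight are dominated by an exponential series and hence absolutely convergent. I expect the main obstacle to be purely combinatorial bookkeeping: carrying the absolute values $|r-l|$ and $|r-l\pm 1|$ uniformly through both signs of $r-l$, and handling correctly the convolution introduced by the $(z^2+1)$ factor in the parity-changing transitions; the analytic content reduces to the two series expansions above together with the single algebraic fact $h(z)^2=(\mu z^2+\lambda)(\lambda z^2+\mu)$.
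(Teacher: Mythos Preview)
Your proposal is correct and follows exactly the approach the paper indicates: the paper's own proof is the single sentence ``It follows by extracting the coefficients of $x^{2r}$ and $x^{2r+1}$ in (\ref{Fsoluzione})--(\ref{GGsoluzione}), respectively,'' and you have simply supplied the combinatorial details of that extraction (the Taylor expansion of $\cosh$/$\sinh$, the binomial expansion of $h(z)^{2n}$, and the $\lambda\leftrightarrow\mu$ symmetry for the odd initial state). Nothing additional is needed.
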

\begin{proof}
It follows by extracting the coefficients of $x^{2 r}$ and $x^{2 r+1}$ in 
(\ref{Fsoluzione})--(\ref{GGsoluzione}), respectively.
\end{proof}
Figure 2 shows some plots of transition probabilities given in Proposition \ref{propProb}. 
%
%--------------------------------------------------------------------
\begin{figure}[t]  %%%%%%%%%%% FIGURA  
%\vspace{20mm}
\begin{center}
\epsfxsize=13.cm
\centerline{\epsfbox{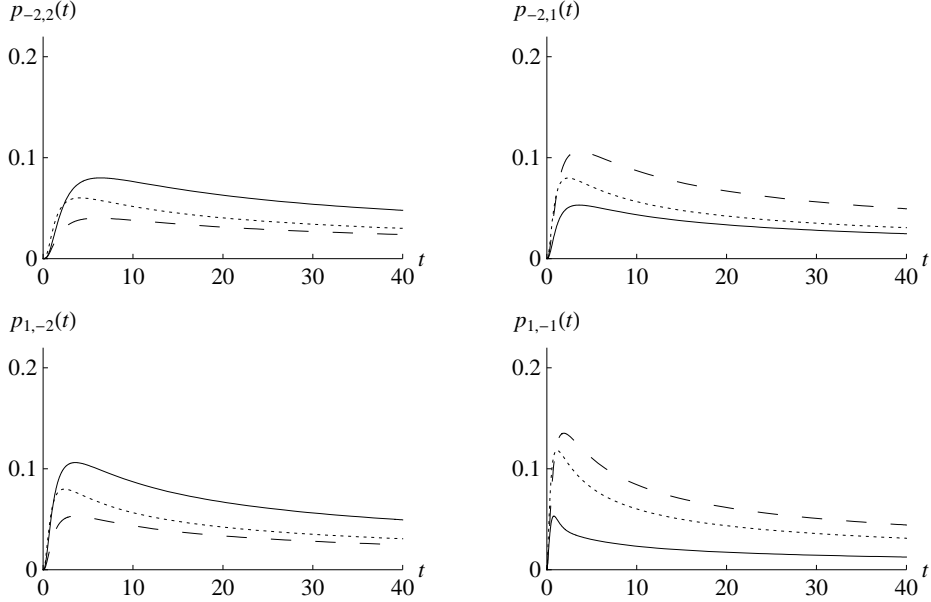}}
\caption{Plots of some transition probabilities for $(\lambda,\mu)=(1,2)$ (solid line), 
$(\lambda,\mu)=(2,2)$ (dotted line), $(\lambda,\mu)=(2,1)$ (dashed line).}
\end{center}
\end{figure}
%--------------------------------------------------------------------
%
% =================================================================================
\subsection{Symmetry properties}
% =================================================================================
%
The relevance of symmetry properties of transition functions of birth-death processes 
has been emphasized in Anderson and McDunnough \cite{AnMcDu1990} and 
in Di Crescenzo \cite{DiCr1998}, for instance. 
We stress that the role of symmetry is closely connected to the analysis of the 
first-passage-time problem in Markov process. See, for instance, the contributions 
of Giorno et al.\ \cite{Gietal89}, \cite{Gietal2010} 
and Di Crescenzo et al.\ \cite{DiCretal1995}, \cite{DiCretal1997}, 
where some relations involving the transition probability density functions and the 
first-passage-time density functions of symmetric diffusion processes 
in the presence of suitable time-varying boundaries. 
\par
Hereafter we analyze some symmetry properties of the transition probabilities obtained in Proposition 1. 
The proof is omitted, since it follows from direct analysis of the probabilities (\ref{probparipari})--(\ref{probdispari2}). 
When necessary we emphasize the dependence on the parameters by writing $p_{k,n}(t;\lambda,\mu)$ instead of $p_{k,n}(t)$.
\begin{proposition}\label{propsymm}
For every $t \geq 0$ and $n,k \in \mathbb{Z}$  the following symmetry relations hold:
$$
\begin{array}{lll}
(i) & p_{N-k,N-n}(t) = p_{k,n}(t), 
&  \hbox{if $N$ is  even} \\
(ii) & p_{N-k,N-n}(t; \lambda, \mu) = p_{k,n}(t;\mu, \lambda),\quad 
& \hbox{if $N$ is odd}; \\
(iii)\quad & p_{n,k}(t;\lambda,\mu) = p_{k,n}(t;\mu,\lambda);  
& {} \\
(iv) & p_{N+k,N+n}(t) = p_{k,n}(t), 
& \hbox{if $N$ is even}\\
(v) & p_{N+k,N+n}(t; \lambda, \mu) = p_{k,n}(t;\mu, \lambda),
& \hbox{if $N$ is odd}.
\end{array}
$$
\end{proposition}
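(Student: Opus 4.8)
The plan is to derive all five relations directly from the closed-form series in Proposition~\ref{propProb}, exploiting the structural symmetry of the summands rather than re-deriving anything from the generating functions. The key observation is that each transition probability depends on the pair $(k,n)$ only through certain absolute differences of the form $|r-l|$, $|r-l\pm1|$ (after writing $k=2l$ or $2l+1$ and $n=2r$ or $2r+1$), together with a prefactor that distinguishes the $\lambda$-driven (even-origin) case from the $\mu$-driven (odd-origin) case. So the whole proof reduces to bookkeeping about how the parity of $N$, the swap $(k,n)\mapsto(n,k)$, and the reflection $k\mapsto N\pm k$ act on those differences and on the parity class of the initial state.

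First I would handle (iii), the reflection-free parameter-swap identity, since it is the cleanest and the others can be reduced to it. Write the four cases of $p_{k,n}$ according to the parities of $k$ and $n$. For $k=2l$, $n=2r$ versus $k=2r$, $n=2l$ with $\lambda,\mu$ interchanged: formula (\ref{probparipari}) for $p_{2l,2r}(t;\lambda,\mu)$ has index $|r-l|$, the factor $(\lambda t)^{2n}/(2n)! + \frac{\mu-\lambda}{\lambda}(\lambda t)^{2n+1}/(2n+1)!$, and the binomial sum in powers of $\lambda/\mu$; the expression for $p_{2r,2l}(t;\mu,\lambda)$ is obtained by swapping $\lambda\leftrightarrow\mu$ and $l\leftrightarrow r$ throughout. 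Since $|r-l|=|l-r|$, the binomial double-sum $\sum_k\binom{n}{k}\binom{n}{k+|r-l|}(\mu/\lambda)^{-2k-|r-l|}$ is visibly invariant, and the bracketed factor maps to itself under the same swap. The odd-odd case is identical with $\lambda,\mu$ roles reversed, and the mixed even-odd cases match the odd-even cases after the swap because $|r-l|$ and $|r-l+1|$ (respectively $|r-l-1|$) interchange appropriately when $l\leftrightarrow r$. This is pure inspection.

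Next, for the reflection identities I would reduce (i), (ii), (iv), (v) to (iii) together with the elementary fact that reflecting about an integer either preserves or flips parities in a controlled way. Write $N-k$ and $N-n$: if $N$ is even then $N-k$ has the same parity as $k$ and $N-n$ the same as $n$, and one checks that the relevant difference, e.g.\ $\frac{(N-n)-(N-k)}{2}=\frac{k-n}{2}=-(r-l)$ in the even-even case, has the same absolute value, so $p_{N-k,N-n}(t;\lambda,\mu)=p_{k,n}(t;\lambda,\mu)$ with no parameter swap — this is (i). If $N$ is odd then $N-k$ flips parity relative to $k$: an even initial state becomes odd, so the $\lambda$-prefactor becomes the $\mu$-prefactor, which is exactly the effect of swapping $\lambda\leftrightarrow\mu$; tracking the differences shows the surviving index is again unchanged in absolute value, yielding (ii). Then (iv) and (v) follow from (i) and (ii) applied with $k\mapsto-k$, $n\mapsto-n$ (equivalently from (iii) composed with a reflection), or directly by the same parity/difference count with $N+k$, $N+n$.

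The main obstacle is purely combinatorial vigilance rather than conceptual difficulty: in the mixed-parity probabilities (\ref{probpari2}) and (\ref{probdispari1}) there are two nested double-sums with the three different offsets $|r-l|$, $|r-l+1|$, $|r-l-1|$, and under a reflection $r\mapsto (N-n-\text{const})/2$ one must verify that the pair of offsets $\{|r-l|,|r-l+1|\}$ maps onto the pair $\{|r-l-1|,|r-l|\}$ (with the two summands possibly exchanged), so that the two formulas genuinely coincide. Getting the constant shifts right — distinguishing $N-1$, $N$, $N+1$ careful for odd versus even $N$ — is the only place an error could creep in; everything else is term-by-term matching. For this reason, as the paper notes, the proof is omitted, but the verification above is the one I would carry out.
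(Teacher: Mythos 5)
Your overall strategy --- direct inspection of the closed-form series of Proposition \ref{propProb}, with the reflections (i), (ii), (iv), (v) reduced to parity bookkeeping on the offsets $|r-l|$, $|r-l\pm 1|$ --- is exactly the ``direct analysis of the probabilities'' that the paper invokes (its own proof is omitted), and your handling of (i), (ii), (iv), (v) and of the mixed-parity case of (iii) is correct in substance. However, your verification of (iii) in the same-parity cases contains a genuine error: the bracketed factor in (\ref{probparipari}) does \emph{not} ``map to itself'' under the swap $\lambda\leftrightarrow\mu$. Under that swap,
$$
\frac{(\lambda t)^{2n}}{(2n)!}+\frac{\mu-\lambda}{\lambda}\,\frac{(\lambda t)^{2n+1}}{(2n+1)!}
\;\longmapsto\;
\frac{(\mu t)^{2n}}{(2n)!}+\frac{\lambda-\mu}{\mu}\,\frac{(\mu t)^{2n+1}}{(2n+1)!},
$$
and while the even-order ($t^{2n}$) contributions can indeed be matched after reindexing $k\mapsto n-|r-l|-k$ in the inner binomial sum, the odd-order ($t^{2n+1}$) contributions come out with opposite signs of $(\mu-\lambda)$ and do not cancel. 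A concrete check: $p_{0,0}(t;\lambda,\mu)=1-2\lambda t+O(t^2)$ while $p_{0,0}(t;\mu,\lambda)=1-2\mu t+O(t^2)$, so the identity $p_{n,k}(t;\lambda,\mu)=p_{k,n}(t;\mu,\lambda)$ cannot hold when $k$ and $n$ have the same parity (unless $\lambda=\mu$).

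What the formulas actually yield in the same-parity case --- since (\ref{probparipari}) and (\ref{probdispari2}) depend on $l,r$ only through $|r-l|$ --- is the plain symmetry $p_{n,k}(t;\lambda,\mu)=p_{k,n}(t;\lambda,\mu)$ with \emph{no} parameter swap. This is consistent with reversibility: the chain satisfies $\pi_k\,p_{k,n}(t)=\pi_n\,p_{n,k}(t)$ for the measure assigning mass $\mu$ to even and $\lambda$ to odd states, and $\pi_k=\pi_n$ exactly when $k$ and $n$ have the same parity, while $\pi_k/\pi_n=\mu/\lambda$ in the mixed case, which is where the parameter swap in (iii) genuinely enters. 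So the ``pure inspection'' you defer to would, if carried out, refute rather than confirm (iii) for same-parity states; the relation needs to be split into the two parity cases (swap for opposite parity, no swap for equal parity) for your argument --- and the statement itself --- to go through.
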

\par
In Figure 2 the plots of $p_{-2,1}(t)$ and $p_{1,-2}(t)$    
illustrate a case in which property (ii) of Proposition \ref{propsymm} holds. 
% =================================================================================
\subsection{Moments}
% =================================================================================
Hereafter we obtain in closed form the  mean and the variance of $N(t)$. 
We shall obtain that the mean is equal to the initial state. This result is 
intuitively justified by the symmetry of the Markov chain. Indeed, by choosing 
$N=2k$ and $n=k-r$ in identity (i) of Proposition \ref{propsymm} we have 
$p_{k,k+r}(t)=p_{k,k-r}(t)$ $\forall k,r\in\mathbb{Z}$, and $t\geq 0$. 
\begin{proposition}
For $t \geq 0$ and $k \in \mathbb{Z}$ we have
\begin{eqnarray}
&& \hspace*{-1.1cm}
E[N(t)|N(0)=k]=k,
\label{media}
\\
&& \hspace*{-1.1cm}
Var[N(t)|N(0)=k]= \frac{4\lambda\mu }{\lambda+\mu}\,t+(-1)^k\frac{\lambda(\lambda-\mu)}{(\lambda+\mu)^2}
\left[1-{\rm e}^{-2(\lambda+\mu)t}\right],
\label{var}
\end{eqnarray}
\end{proposition}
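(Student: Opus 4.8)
The plan is to extract mean and variance from the probability generating functions obtained above rather than from the series in Proposition~\ref{propProb}, since the PGFs are far more compact. First I would assemble the full PGF of $N(t)$ by adding the even-state and odd-state parts: for initial state $k$, define $\Phi_k(z,t) := F_k(z,t) + G_k(z,t) = \sum_{n\in\mathbb{Z}} z^n p_{k,n}(t)$, using the appropriate pair among (\ref{Fsoluzione})--(\ref{GGsoluzione}) according to the parity of $k$. Then $E[N(t)\mid N(0)=k] = \partial_z \Phi_k(z,t)\big|_{z=1}$ and $E[N(t)(N(t)-1)\mid N(0)=k] = \partial_{zz}^2 \Phi_k(z,t)\big|_{z=1}$, from which the variance follows via $Var = \partial_{zz}^2\Phi_k|_{z=1} + \partial_z\Phi_k|_{z=1} - (\partial_z\Phi_k|_{z=1})^2$.

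The key computational device is to understand the behaviour of $h(z) = \sqrt{(\mu z^2+\lambda)(\lambda z^2+\mu)}$ and of the argument $t\,h(z)/z$ near $z=1$. At $z=1$ one has $h(1) = \lambda+\mu$, so $h(1)/1 = \lambda+\mu$ and the exponential prefactor ${\rm e}^{-(\lambda+\mu)t}$ exactly cancels against $\cosh[(\lambda+\mu)t]$ and $\sinh[(\lambda+\mu)t]$ to leave the normalization $\Phi_k(1,t)=1$, as it must. For the derivatives I would compute the first and second derivatives at $z=1$ of the elementary building blocks $z^k$, $h(z)$, $1/h(z)$, $z^2+1$, and $\psi(z) := t\,h(z)/z$, and then apply the product and chain rules to the bracketed expressions. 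It is convenient to note $h(z)^2 = \lambda\mu(z^4+1) + (\lambda^2+\mu^2)z^2$, so $2h h' = 4\lambda\mu z^3 + 2(\lambda^2+\mu^2)z$, giving $h'(1) = (\lambda-\mu)^2/(\lambda+\mu) + 2\lambda\mu/(\lambda+\mu)\cdot\!$\,---\,more simply $h'(1) = \big(2\lambda\mu + \lambda^2+\mu^2\big)/(\lambda+\mu) = \lambda+\mu$; differentiating once more gives $h''(1)$, and hence $\psi'(1)$ and $\psi''(1)$. The mean computation should collapse cleanly: by the symmetry remark already recorded ($p_{k,k+r}(t)=p_{k,k-r}(t)$ when $k$ is even, and the analogue via Proposition~\ref{propsymm}(ii) when $k$ is odd) the distribution of $N(t)-k$ is symmetric about $0$, forcing $E[N(t)\mid N(0)=k]=k$; I would present this symmetry argument as the proof of (\ref{media}) and use the PGF only as a cross-check.

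For the variance I would substitute the derivative values into the formula above. Writing $\Phi_k(z,t) = z^k\,\Psi_{\epsilon(k)}(z,t)$ where $\epsilon(k)$ is the parity and $\Psi$ is the parity-dependent factor (which satisfies $\Psi(1,t)=1$, $\partial_z\Psi(1,t)=0$ by the symmetry just noted), the mean reduces to $k$ and the second factorial moment to $k(k-1) + 2k\cdot 0 + \partial_{zz}^2\Psi(1,t)$, so that $Var[N(t)\mid N(0)=k] = \partial_{zz}^2\Psi_{\epsilon(k)}(1,t)$, independent of $k$ except through its parity. Evaluating $\partial_{zz}^2\Psi_{\epsilon(k)}(1,t)$ is then the one genuinely laborious step: it produces a polynomial-in-$t$ term of degree one with coefficient $4\lambda\mu/(\lambda+\mu)$ from the $\sinh$ differentiated against the linear growth of $\psi$, plus a bounded oscillatory remainder of the form $c_k\big[1-{\rm e}^{-2(\lambda+\mu)t}\big]$ arising from the $\cosh^2$/$\sinh^2$ combinations (using ${\rm e}^{-2(\lambda+\mu)t}\cosh[2(\lambda+\mu)t] = \tfrac12(1+{\rm e}^{-4(\lambda+\mu)t})$-type identities), with $c_k = (-1)^k\lambda(\lambda-\mu)/(\lambda+\mu)^2$ carrying the parity dependence through the sign $(\mu-\lambda)$ versus $(\lambda-\mu)$ in (\ref{Fsoluzione})/(\ref{GGsoluzione}). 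The main obstacle is purely organizational: keeping the second-derivative bookkeeping correct through the chain rule on $\cosh[\psi(z)]$ and $\sinh[\psi(z)]$ and correctly isolating the linear-in-$t$ part from the transient part; once the derivative values $h'(1),h''(1),\psi'(1),\psi''(1)$ are tabulated, the rest is substitution and simplification. I would double-check the final expression in the symmetric case $\lambda=\mu$, where it must reduce to the variance $2\lambda t$ of the randomized random walk, and in the limit $t\to 0$, where it must vanish.
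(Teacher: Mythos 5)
Your route is sound but genuinely different from the paper's. The paper never touches $z$-derivatives of the generating functions: it writes $\frac{{\rm d}}{{\rm d}t}\E[N^2(t)\,|\,N(0)=k]=2\lambda F_k(1,t)+2\mu G_k(1,t)$ directly from system (\ref{eq1}) (the coefficients of $p_{k,n}$ telescope to the constants $2\lambda$, $2\mu$ according to parity), evaluates (\ref{Fsoluzione})--(\ref{GGsoluzione}) only at $z=1$, and integrates a trivial linear ODE with $\psi_k(0)=k^2$. That is considerably lighter than your plan, which requires $h'(1)$, $h''(1)$ and the second derivative of the ratio $[z(\mu-\lambda)+\lambda(z^2+1)]/h(z)$ at $z=1$; on the other hand your computation is self-contained and the simplifications are real ($\psi'(1)=0$ and $\partial_z\Psi(1,t)=0$ kill most cross terms). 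Your symmetry argument for (\ref{media}) is exactly the remark the paper makes before the proposition, and is fine. One small inaccuracy in your mechanism: $\Psi$ is linear in $\cosh\psi$ and $\sinh\psi$, so no $\cosh^2/\sinh^2$ combinations ever appear; the transient comes simply from ${\rm e}^{-(\lambda+\mu)t}\sinh[(\lambda+\mu)t]=\tfrac12\big(1-{\rm e}^{-2(\lambda+\mu)t}\big)$.

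The one substantive point to fix is your claimed constant $c_k=(-1)^k\lambda(\lambda-\mu)/(\lambda+\mu)^2$. If you actually carry out your computation for odd initial state, the parity swap replaces the ratio $[z(\mu-\lambda)+\lambda(z^2+1)]/h(z)$ by $[z(\lambda-\mu)+\mu(z^2+1)]/h(z)$, and its second derivative at $z=1$ comes out as $2\mu(\mu-\lambda)/(\lambda+\mu)^2$, not $2\lambda(\mu-\lambda)/(\lambda+\mu)^2$: the whole prefactor exchanges $\lambda\leftrightarrow\mu$, not just the sign. This agrees with the paper's own intermediate formula ($\frac{{\rm d}}{{\rm d}t}\psi_k=\frac{4\lambda\mu}{\lambda+\mu}+\frac{2\mu(\mu-\lambda)}{\lambda+\mu}{\rm e}^{-2(\lambda+\mu)t}$ for odd $k$) and with property (v) of Proposition \ref{propsymm} applied with $N=1$, and it shows that the displayed formula (\ref{var}) is misprinted for odd $k$ (the two expressions coincide only when $\lambda=\mu$). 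So do not force your answer into the $(-1)^k\lambda(\lambda-\mu)$ form; report the transient coefficient as $\lambda(\lambda-\mu)/(\lambda+\mu)^2$ for even $k$ and $\mu(\mu-\lambda)/(\lambda+\mu)^2$ for odd $k$.
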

%
%%%%%%%%%%%%%%%%%%%%%%%%%%%%%%%%%
\begin{proof}
The mean  (\ref{media}) easily follows from Eqs.\ (\ref{eq1}) and (\ref{condizioneiniziale}). 
Moreover, by setting $\psi_{k}(t):=E[N^2(t)|N(0)=k]$ from  system (\ref{eq1}) we obtain:
\begin{eqnarray*}
&& \hspace*{-0.8cm} 
\frac{{\rm d}}{{\rm d}t}\psi_{k}(t)= 2\mu\sum^{+\infty}_{n=-\infty}{p_{k,2n+1}}(t)
+2\lambda\sum^{+\infty}_{n=-\infty}p_{k,2n}(t)
\\
&& \hspace*{0.8cm}
=2\mu \,G_{k}(1,t)+2\lambda\,F_{k}(1,t), 
\qquad\qquad 
t \geq 0,
\end{eqnarray*}
where $F_{k}$ and $G_{k}$ have been defined in (\ref{defFG}). 
Hence, recalling Eqs.\ (\ref{Fsoluzione})--(\ref{GGsoluzione}), after some calculations we have 
$$
\frac{{\rm d}}{{\rm d}t}\psi_{k}(t)=\left\{ \begin{array}{l}
\displaystyle{
\frac{4\lambda\mu}{\lambda+\mu}+\frac{2\lambda(\lambda-\mu)}{\lambda+\mu}\, {\rm e}^{-2(\lambda+\mu)t}, \qquad k\ \rm even}
\\
\displaystyle{\frac{4\lambda\mu }{\lambda+\mu}+\frac{2\mu(\mu-\lambda)}{\lambda+\mu}{\rm e}^{-2(\lambda+\mu)t}, \qquad k\ \rm odd}
\end{array} \right.
$$
\nonumber
with $\psi_{k}(0)=k^{2}$. Finally, Eq.\ (\ref{var}) follows.
\end{proof}
% =================================================================================
\section{A reflecting boundary}
% =================================================================================
In this section we consider the case in which the state-space is reduced to 
the set of non-negative integers. We shall denote by $\{R(t);\,t\geq 0\}$ the birth-death process 
having state-space $\{0,1,2,\ldots\}$, with $0$ reflecting, whose rates are identical to 
those of $N(t)$. This describes, for instance,  the number of customers in a queueing system 
with alternating rates. For $n=0,1,2,\ldots$, let us introduce the transition probabilities
$$
q_{k,n}(t)=P\{{R}(t)={n}\,|\,{R}(0)={k}\},\quad t\geq 0.
$$
The related differential-difference equations are, for 
$n=1,2,\ldots,$

\begin{equation}
\left\{ \begin{array}{l}
\displaystyle{\frac{{\rm d}}{{\rm d}t} q_{k,0}(t)=\mu\, q_{k,1}(t)-{\lambda}\, q_{k,0}(t)},\\
 \\
\displaystyle{\frac{{\rm d}}{{\rm d}t} q_{k,2 n}(t)=\mu\, q_{k,2 n-1}(t)-2{\lambda}\, q_{k,2 n}(t)+\mu\, q_{k,2 n+1}(t)}, \\
 \\
\displaystyle{\frac{{\rm d}}{{\rm d}t} q_{k,2 n-1}(t)=\lambda\, q_{k,2 n}(t)-2{\mu}\, q_{k,2 n-1}(t)+ \lambda\, q_{k,2 n-2}(t)},
\end{array} \right.
\label{eq1new}
\end{equation}
with
\begin{equation}
q_{k,n}(0)=\delta_{n,k}.
\label{condizioneiniziale1}
\end{equation}
We point out that the steady-state distribution of $R(t)$ does not exist. 
Indeed, from system (\ref{eq1new}) it is not hard to see that 
$\displaystyle{\lim_{t\rightarrow\infty}q_{k,n}(t)=0}$ $\forall k,n \in \mathbb{Z}$.
%%%%%%%%%%%%%%%%%%%%%%%%%%%%%%%%%%%%
\subsection{Moments}
%%%%%%%%%%%%%%%%%%%%%%%%%%%%%%%%%%%% 
Let us now set, for $k \in \mathbb{Z}$, 
\begin{equation}
P_{k}(t)=P\left\{R(t)\,{\rm even}\,|\,R(0)=k\right\}=\sum^{+\infty}_{n=0}{q_{k,2n}}(t),\qquad t \geq 0.
\label{pari}
\end{equation}
Now mean and variance of $R(t)$ will be formally expressed in terms of (\ref{pari}).
\begin{proposition}
For $t \geq 0$ we have
\begin{eqnarray}
&& \hspace*{-1.1cm}
\label{eqmean}
E[R(t)|R(0)=k]=\lambda\, {\int^{t}_{0}}q_{k,0}(\tau)d\tau + k,
\\
&& \hspace*{-1.1cm}
Var[R(t)|R(0)=k]=2(\lambda-\mu){\int^{t}_{0}}P_{k}(\tau)d\tau - \lambda(2k +1){\int^{t}_{0}}q_{k,0}(\tau)d\tau
\nonumber
\\
&& \hspace*{2.6cm}
-\lambda^2 \left[{\int^{t}_{0}}q_{k,0}(\tau)d\tau \right]^2+ 2\mu t,
\label{eqvar}
\end{eqnarray}
where  
\begin{eqnarray}
P_{k}(t)=\frac{2\mu}{\lambda+\mu}+\frac{\lambda-\mu}{\lambda+\mu}\, {\rm e}^{-2(\lambda+\mu)t}+ \lambda\,{\int^{t}_{0}}{\rm e}^{-2(\lambda+\mu)(t-\tau)}q_{k,0}(\tau)d\tau. 
\label{eqP}
\end{eqnarray}
\label{proposition}
\end{proposition}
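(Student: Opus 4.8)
The plan is not to seek any explicit form for the individual $q_{k,n}(t)$, but to read the three assertions off the Kolmogorov system (\ref{eq1new}) directly, reducing everything to the two quantities $q_{k,0}(t)$ and $P_k(t)$. I would start with the mean. Put $m_k(t)=E[R(t)|R(0)=k]=\sum_{n\geq 0}n\,q_{k,n}(t)$; differentiating term by term, the standard birth--death identity gives $\frac{{\rm d}}{{\rm d}t}m_k(t)=\sum_{n\geq 0}(b_n-d_n)\,q_{k,n}(t)$, where $b_n,d_n$ denote the birth and death rates out of state $n$. The alternating-rate assumption forces $b_n=d_n$ at every interior state ($b_{2m}=d_{2m}=\lambda$ and $b_{2m-1}=d_{2m-1}=\mu$ for $m\geq 1$), while at the reflecting state $b_0=\lambda$, $d_0=0$; hence all interior terms cancel and $\frac{{\rm d}}{{\rm d}t}m_k(t)=\lambda\,q_{k,0}(t)$, which integrated against $m_k(0)=k$ yields (\ref{eqmean}).

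For the representation (\ref{eqP}) of $P_k(t)$ I would use that every jump of $R(t)$ flips the parity of the state, so $\frac{{\rm d}}{{\rm d}t}P_k(t)$ equals the rate of probability flow from the odd states into the even states minus the reverse rate. Each occupied odd state emits into the even states at total rate $2\mu$, giving an inflow $2\mu\bigl(1-P_k(t)\bigr)$; each occupied even state emits into the odd states at rate $2\lambda$, except the reflecting state $0$, which does so at rate $\lambda$ only, giving an outflow $2\lambda P_k(t)-\lambda\,q_{k,0}(t)$. Hence $\frac{{\rm d}}{{\rm d}t}P_k(t)+2(\lambda+\mu)P_k(t)=2\mu+\lambda\,q_{k,0}(t)$, a first-order linear equation; solving it with the integrating factor ${\rm e}^{2(\lambda+\mu)t}$ and the initial value $P_k(0)$ ($=1$ if $k$ is even, $=0$ if $k$ is odd) yields (\ref{eqP}). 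Equivalently, one may sum the first line of (\ref{eq1new}) together with the second over all $n\geq 1$ and use $\sum_{n\geq0}q_{k,2n+1}(t)=1-P_k(t)$.

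For the variance I would set $\psi_k(t)=E[R^2(t)|R(0)=k]$ and use the companion identity $\frac{{\rm d}}{{\rm d}t}\psi_k(t)=\sum_{n\geq 0}\bigl[2n(b_n-d_n)+(b_n+d_n)\bigr]q_{k,n}(t)$. Since $b_n-d_n=0$ for $n\geq 1$, every $2n(b_n-d_n)$ term drops out, and grouping the remaining $(b_n+d_n)$ contributions by parity leaves $\frac{{\rm d}}{{\rm d}t}\psi_k(t)=\lambda\,q_{k,0}(t)+2\lambda\bigl(P_k(t)-q_{k,0}(t)\bigr)+2\mu\bigl(1-P_k(t)\bigr)=2(\lambda-\mu)P_k(t)-\lambda\,q_{k,0}(t)+2\mu$. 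Integrating with $\psi_k(0)=k^2$ and then forming $\psi_k(t)-m_k(t)^2$ with $m_k(t)$ given by (\ref{eqmean}): expanding $\bigl(k+\lambda\int_0^t q_{k,0}(\tau)\,d\tau\bigr)^2$ produces the term $-\lambda^2\bigl[\int_0^t q_{k,0}(\tau)\,d\tau\bigr]^2$ and a cross term $-2k\lambda\int_0^t q_{k,0}(\tau)\,d\tau$ which, combined with the $-\lambda\int_0^t q_{k,0}(\tau)\,d\tau$ already present from $\psi_k$, gives the coefficient $-\lambda(2k+1)$; this is precisely (\ref{eqvar}).

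All of the above is formal, and the points needing care are (a) justifying the term-by-term differentiation and the interchange of $\frac{{\rm d}}{{\rm d}t}$ with the infinite sums --- routine once one observes that, the rates being bounded, $R(t)$ is stochastically dominated by a pure-birth process of rate $\max\{\lambda,\mu\}$, so $m_k(t)$ and $\psi_k(t)$ are finite for every $t$ --- and (b) isolating the single state-$0$ boundary term from the interior cancellations in each of the three computations, this boundary term being the sole origin of every $q_{k,0}(t)$ appearing in the statement. I expect (b) to be the main obstacle: once the boundary contribution is carried consistently through the mean, through $P_k(t)$, and through the second moment, the remainder is merely solving a linear ODE and collecting terms.
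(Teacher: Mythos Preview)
Your proposal is correct and follows essentially the same route as the paper: derive $\frac{{\rm d}}{{\rm d}t}m_k(t)$, $\frac{{\rm d}}{{\rm d}t}P_k(t)$ and $\frac{{\rm d}}{{\rm d}t}\psi_k(t)$ directly from the Kolmogorov system (\ref{eq1new}), isolate the boundary contribution at state $0$, solve the resulting linear ODE for $P_k(t)$, then integrate and subtract $m_k(t)^2$. Your use of the generic birth--death moment identities $\frac{{\rm d}}{{\rm d}t}m_k=\sum_n(b_n-d_n)q_{k,n}$ and $\frac{{\rm d}}{{\rm d}t}\psi_k=\sum_n\bigl[2n(b_n-d_n)+(b_n+d_n)\bigr]q_{k,n}$ is simply a compact rephrasing of the same term-by-term computation the paper carries out explicitly, and your sign $-\lambda\,q_{k,0}(t)$ in $\frac{{\rm d}}{{\rm d}t}\psi_k(t)$ is the one consistent with (\ref{eqvar}).
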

\begin{proof}
The mean (\ref{eqmean}) easily follows from system (\ref{eq1new}) and condition (\ref{condizioneiniziale1}). 
Moreover, from Eqs. (\ref{eq1new}) we obtain 
\begin{eqnarray*}
&& \hspace*{-0.8cm}
\frac{{\rm d}}{{\rm d}t} E[R^2(t)|R(0)=k]= 2\mu\sum^{+\infty}_{n=1}{q_{k,2n+1}}(t)+2\mu q_{k,1}(t)+\lambda q_{k,0}(t)
+2\lambda{\sum^{+\infty}_{n=1}}q_{k,2n}(t)
\\
&& \hspace*{0.8cm}
=2\mu[1-P_{k}(t)-q_{k,1}(t)]+2\mu q_{k,1}(t)+\lambda q_{k,0}(t)+2\lambda\,[P_{k}(t)-q_{k,0}(t)]
\\
&& \hspace*{0.8cm}
=2(\lambda - \mu)P_{k}(t)+ \lambda q_{k,0}(t)+2 \mu,
\end{eqnarray*}
where $P_{k}(t)$ satisfies the differential equation
\begin{eqnarray}
\frac{{\rm d}}{{\rm d}t}P_{k}(t)= -2(\mu + \lambda)\, P_{k}(t)+ \lambda\,q_{k,0}(t)+2 \mu.
\label{A}
\end{eqnarray}
Since the solution of (\ref{A}) is Eq.\ (\ref{eqP}), the conditional variance (\ref{eqvar}) easily follows.
\end{proof}
% 
%%%%%%%%%%%%%%%%%%%%%%
\subsection{Probabilities}
%%%%%%%%%%%%%%%%%%%%%%
%
We note that when $k=n=0$ the transition probability is given by 
(see Section 3 of Iuliano and Martinucci, 2010)
\begin{eqnarray*}
&& \hspace*{-0.7cm}
q_{0,0}(t)=\frac{{\rm e}^{-a t}}{a+b} \sum_{k=0}^{+\infty} \frac{(t/2)^{2k}}{k!^{2}}
\left\{ (a^{2 k+1}+b^{2 k+1})\,
{_{1}F_{2}}\left(-\frac{1}{2}, k+\frac{1}{2},k+1, \frac{b^2 t^2}{4}\right) \right.
\\
&& \hspace*{0.5cm}
\left. +\frac{t (a^{2 k+2}-b^{2 k+2})}{2(k+1)}\,
{_{1}F_{2}}\left(-\frac{1}{2}, k+1,k+\frac{3}{2}, \frac{b^2 t^2}{4}\right) \right\}, 
\qquad\qquad t\geq0,
\end{eqnarray*} 
where 
$$ 
a=\lambda+\mu,\qquad b=\lambda-\mu.
$$
Now we analyse the case in which the initial state is $k=1$.
Denoting by 
$$
\pi_{k,n}(s):={\cal L}_s [q_{k,n}(t)]=\int_{0}^{\infty} {\rm e}^{-s t}\,q_{k,n}(t)\,{\rm d} t,\quad s>0,
$$
the Laplace transform of the transition probabilities of $R(t)$, from Eqs. (\ref{eq1new}) we have:
\begin{equation}
\left\{ \begin{array}{l}
(\lambda +s)\, \pi_{1,0}(s)= \mu \pi_{1,1}(s) \\
%\\
(2 \mu+s)\, \pi_{1,1}(s)= 1+ \lambda \pi_{1,2}(s) + \lambda \pi_{1,0}(s) \\
%\\
(2 \lambda +s)\, \pi_{1,2 n}(s)= \mu \pi_{1,2 n-1}(s)+\mu \pi_{1,2 n+1}(s),\qquad n \geq 1 \\
%\\
(2 \mu +s)\, \pi_{1,2 n-1}(s)=\lambda \pi_{1,2 n}(s)+\lambda \pi_{1,2 n-2}(s),\qquad n\geq 2. \\
\end{array} \right.
\label{eq33}
\end{equation}
The solution of system (\ref{eq33}) involves the roots of the biquadratic equation 
$$
\lambda \mu\, x^4 -\big[(\lambda+\mu +s)^2-\lambda^2 -\mu^2\big]\,x^2 +\lambda \mu=0,
$$
which are  given by
\begin{eqnarray*}
&& \hspace*{-1.2cm}
\psi_1^2(s)=\frac{(A+B)^2}{a^2-b^2},\qquad \psi_2^2(s)=\frac{(A-B)^2}{a^2-b^2},
\label{equationSol}
\end{eqnarray*}
with 
$$
A^2=(a+s)^2-a^2,\qquad B^2=(a+s)^2-b^2.
$$ 
Since $\psi_1^2(s)>1$ and $0<\psi_2^2(s)<1$, from system (\ref{eq33})
we finally obtain: 
\begin{equation}
\pi_{1,2 n}(s)= 
\frac{(2\mu+s)(\lambda +s)\left[\psi_2^2(s)\right]^{n+1}}{\lambda^2 \left[\mu (1- \psi_2^2(s))- s\,\psi_2^2(s)\right]},\qquad n \geq 1,
\label{eqPP}
\end{equation}
and, similarly, 
\begin{eqnarray}
&& \hspace*{-0.6cm}
\pi_{1,2 n-1}(s)=
\frac{(\lambda +s)\left[\psi_2^2(s)\right]^n\left[1 + \psi_2^2(s)\right]}{\lambda 
\left[\mu (1- \psi_2^2(s))- s\,\psi_2^2(s)\right]},
\qquad n \geq 1.
\label{eqPP2}
\end{eqnarray}
By making use of Eqs.\ (\ref{eqPP}) and (\ref{eqPP2}) and substituting in (\ref{eq33}), we have
\begin{equation}
\pi_{1,0}(s)=\frac{(2 \lambda +s)(2\mu +s)-AB}{\lambda \,\left[s(2\mu+s)+AB\right]}.
\label{pi10}
\end{equation}
By inversion of (\ref{pi10}) after some calculations we obtain 
\begin{eqnarray}
&& \hspace*{-1.6cm}
q_{1,0}(t)=\frac{{\rm e}^{-a t}}{2 \lambda (a+b)}
\int_{0}^{t}\!\!\!\ \left[-b^2\,\frac{I_{1}(b(t-s))}{b(t-s)}+a^2\,\frac{I_{1}(a(t-s))}{a(t-s)}\right]h(s)\rm{d}s+
\nonumber
\\
&& \hspace*{-0.5cm}
+\frac{{\rm e}^{-a t}}{2 \lambda (a+b)}(a^2-b^2)
\int_{0}^{t}  \frac{b(t-s)}{2}\,{_{1}F_{2}}\left(\frac{1}{2}, \frac{3}{2},
2, \frac{b^2 (t-s)^2}{4}\right) h(s) \rm{d}s,
\label{int}
\end{eqnarray}
where 
$$
 h(x):= a\left[I_{0}(ax)+ I_{1}(ax)\right]+b\left[I_{0}(bx)- I_{1}(bx)\right],
$$ 
with $I_{n}(\cdot)$ denoting the modified Bessel function of the first kind. 
The evaluation of the integrals in Eq.\ (\ref{int}) finally gives the following result.
\begin{proposition}\label{propq10}
For $t\geq 0$, we have
\begin{eqnarray}
&& \hspace{-0.7cm}
q_{1,0}(t)=\frac{{\rm e}^{-a t}}{\lambda(a+b)}\bigg\{
\sum_{n=0}^{+\infty} \frac{t^{2n}}{n!(n+1)!}\, \left[\left(\frac{a}{2}\right)^{2 n+2}-\left(\frac{b}{2}\right)^{2 n+2}\right]\xi\left(\frac{1}{2},1,a,b\right)
\nonumber
\\
&& \hspace{0.4cm}
+\sum_{n=0}^{+\infty} \frac{t^{2n+1}}{n!(n+1)!(2n+1)}\,\left[\frac{a^2-b^2}{2}\left(\frac{b}{2}\right)^{2 n+1}\right]\xi\left(1,\frac{3}{2},a,b\right)
\nonumber
\\
&& \hspace{0.4cm}
+\sum_{n=0}^{+\infty} \frac{t^{2n+1}}{n!(n+1)!(2n+1)}\, \left[\left(\frac{a}{2}\right)^{2 n+2}-\left(\frac{b}{2}\right)^{2 n+2}\right]\eta\left(1,\frac{3}{2},a,b\right)
\nonumber
\\
&& \hspace{0.4cm}
+\sum_{n=0}^{+\infty} \frac{t^{2n+2}}{n!(n+1)!(2n+1)(2n+2)}\,\left[\frac{a^2-b^2}{2}\left(\frac{b}{2}\right)^{2 n+1}\right]\eta\left(\frac{3}{2},2,a,b\right)\bigg\},
\nonumber 
\end{eqnarray}
where
$$
\xi(u,v,a,b)={_{1}F_{2}}\left(\frac{1}{2}; n+u,n+v; \frac{a^2 t^2}{4}\right)-{_{1}F_{2}}\left(\frac{1}{2}; n+v,n+u; \frac{b^2 t^2}{4}\right),
$$
$$
\eta(u,v,a,b)=a\;{_{1}F_{2}}\left(\frac{1}{2}; n+u,n+v; \frac{a^2 t^2}{4}\right)+b\;{_{1}F_{2}}\left(\frac{1}{2}; n+v,n+u; \frac{b^2 t^2}{4}\right).
$$
\end{proposition}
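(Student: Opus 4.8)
The starting point is the integral representation (\ref{int}). The plan is to expand every factor depending on $t-s$, together with the function $h(s)$, in power series, to integrate term by term --- which is legitimate since all series in play are those of entire functions and hence converge absolutely and uniformly on $s\in[0,t]$ --- to evaluate the resulting elementary integrals as Beta integrals
$$
\int_0^t (t-s)^{p}\,s^{2m}\,{\rm d}s=\frac{p!\,(2m)!}{(p+2m+1)!}\;t^{p+2m+1},
$$
and finally to recognize the emerging $m$-series as generalized hypergeometric functions. Writing $x=t-s$, the two kernels in (\ref{int}) expand as
$$
a^2\,\frac{I_1(ax)}{ax}-b^2\,\frac{I_1(bx)}{bx}
=2\sum_{n=0}^{+\infty}\frac{x^{2n}}{n!\,(n+1)!}\left[\left(\frac a2\right)^{2n+2}-\left(\frac b2\right)^{2n+2}\right],
$$
$$
(a^2-b^2)\,\frac{bx}{2}\,{}_1F_2\!\left(\tfrac12;\tfrac32,2;\tfrac{b^2x^2}{4}\right)
=(a^2-b^2)\sum_{n=0}^{+\infty}\frac{(b/2)^{2n+1}}{(2n+1)\,n!\,(n+1)!}\;x^{2n+1},
$$
the second using $(1/2)_m/(3/2)_m=1/(2m+1)$ and $(2)_m=(m+1)!$; and I would split $h(s)=\big[aI_0(as)+bI_0(bs)\big]+\big[aI_1(as)-bI_1(bs)\big]$.

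For the $I_0$-part I would insert $I_0(as)=\sum_m(a/2)^{2m}s^{2m}/(m!)^2$, perform the Beta integrals, and convert the factorial ratios into Pochhammer ratios through Legendre's duplication formula $(c)_{2m}=4^m(c/2)_m((c+1)/2)_m$ and $\binom{2m}{m}=4^m(1/2)_m/m!$; this yields
$$
\int_0^t (t-s)^{2n}\big[aI_0(as)+bI_0(bs)\big]\,{\rm d}s=\frac{t^{2n+1}}{2n+1}\,\eta\!\left(1,\tfrac32,a,b\right),
\qquad
\int_0^t (t-s)^{2n+1}\big[aI_0(as)+bI_0(bs)\big]\,{\rm d}s=\frac{t^{2n+2}}{2n+2}\,\eta\!\left(\tfrac32,2,a,b\right).
$$
For the $I_1$-part I would instead use $aI_1(as)=\tfrac{{\rm d}}{{\rm d}s}I_0(as)$ and integrate by parts: for $p\geq 1$ the boundary term at $s=t$ vanishes while the one at $s=0$ contributes $-t^{p}$, leaving
$$
\int_0^t (t-s)^{p}\,aI_1(as)\,{\rm d}s=t^{p}\Big[{}_1F_2\big(\tfrac12;n+u,n+v;\tfrac{a^2t^2}{4}\big)-1\Big],
$$
with $(u,v)=(\tfrac12,1)$ if $p=2n$ and $(u,v)=(1,\tfrac32)$ if $p=2n+1$; subtracting the corresponding $b$-term cancels the two constants $-1$ and, by the symmetry of ${}_1F_2$ in its two lower parameters, produces exactly $t^{2n}\,\xi(\tfrac12,1,a,b)$ and $t^{2n+1}\,\xi(1,\tfrac32,a,b)$ respectively. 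The degenerate case $p=0$ is consistent with this formula because ${}_1F_2(\tfrac12;\tfrac12,1;z)={}_0F_1(;1;z)$, so that the extra boundary value $I_0(at)$ arising at $s=t$ is precisely what the formula predicts.

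It then remains to collect the four products: (first kernel)$\times$($I_1$-part), (first kernel)$\times$($I_0$-part), (second kernel)$\times$($I_1$-part) and (second kernel)$\times$($I_0$-part). By the four identities above these are exactly the four lines of the claimed expression, once the prefactor ${\rm e}^{-at}/[2\lambda(a+b)]$ of (\ref{int}) is combined with the constants $2$ and $a^2-b^2$ produced by the two kernel expansions. I expect the main obstacle to be this middle step: the bookkeeping that turns the factorials left by the Beta integrals into Pochhammer ratios and thereby pins down the inner sums as the correct ${}_1F_2$'s, together with the verification that the $p=0$ boundary terms and the degenerate value of ${}_1F_2(\tfrac12;\tfrac12,1;\cdot)$ keep the $\xi$-bookkeeping consistent; the interchange of summation and integration, by contrast, is routine.
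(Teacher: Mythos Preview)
Your proposal is correct and follows exactly the route the paper indicates: the paper merely states that ``the evaluation of the integrals in Eq.~(\ref{int}) finally gives the following result'' without providing any details, and your argument supplies precisely that evaluation --- expanding both kernels and $h(s)$ in power series, integrating term by term via Beta integrals, and identifying the resulting $m$-sums as the ${}_1F_2$ functions appearing in $\xi$ and~$\eta$. The bookkeeping (duplication formula, integration by parts for the $I_1$-part, the $p=0$ check via ${}_1F_2(\tfrac12;\tfrac12,1;z)=I_0(2\sqrt{z})$) is accurate, and the four products match the four displayed lines with the correct prefactor.
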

\par
In conclusion, some illustrative plots of $q_{1,0}(t)$ are shown in Figure 3.
%
%--------------------------------------------------------------------
\begin{figure}[t]  %%%%%%%%%%% FIGURA  
%\vspace{20mm}
\begin{center}
\epsfxsize=7.5cm
\centerline{\epsfbox{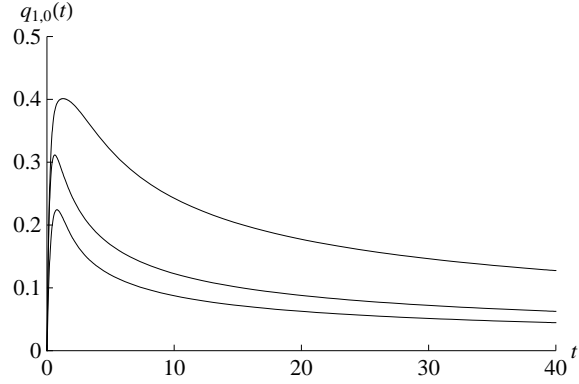}}
\caption{Plots of $q_{1,0}(t)$ for $(\lambda,\mu)=(1,2), (2,2), (2,1)$, from top to bottom.}
\end{center}
\end{figure}
%--------------------------------------------------------------------
%
% =================================================================================
\section{Concluding remarks}
% =================================================================================
Stimulated by some previous works on the applications of stochastic processes to 
the study of chain molecular diffusion, in this paper we have analyzed a birth-death 
process on ${\mathbb Z}$ characterized by alternating transition rates. 
The probability generating functions of even and odd states and the transition 
probabilities of the bilateral process have been obtained when the initial state is arbitrary.  
A preliminary investigation on the transient behavior of the birth-death process 
obtained by superimposing a reflecting boundary in the zero-state has also been performed. 
\par 
In conclusion, the results given in this paper deserve also special interest in the fields of 
chemical queueing processes and two-periodic random walks, according to the lines 
traced in various papers, such as Conolly et al.\ \cite{Coetal1997} 
and B\"ohm and Hornik \cite{BoHo2008}, for instance.
%-------------------------------------------------------------------
\subsection*{\bf Acknowledgments}
%--------------------------------------------------------------------
The authors thank Amelia G.\ Nobile for kind and helpful comments.
This work is partially supported by Ministero dell'Istruzione, dell'Universit\`a e della Ricerca 
(PRIN 2008).
%--------------------------------------------------------------------% 


\begin{thebibliography}{99}
%--------------------------------------------------------------------%

\bibitem{AnMcDu1990}
Anderson W.J.\ and McDunnough P.M., 
\newblock On the representation of symmetric transition functions
\newblock {\em Adv. Appl. Prob.} 22, 548--563 (1990)
 
\bibitem{BoHo2008} 
B\"ohm  W.\ and Hornik K., 
\newblock On two-periodic random walks with boundaries. 
\newblock 
{\em Research Report Series / Department of Statistics and Mathematics}, 75. 
Department of Statistics and Mathematics, 
WU Vienna University of Economics and Business, Vienna. (2008) 

\bibitem{Co1971}
Conolly B.W., 
\newblock On randomized random walks.
\newblock {\em SIAM Review} 13, 81--99 (1971)
 
\bibitem{Coetal1997}
Conolly B.W., Parthasarathy P.R.\ and Dharmaraja S., 
\newblock A chemical queue.
\newblock {\em Math. Scientist} 22, 83--91 (1997)

\bibitem{DiCr1994}
Di Crescenzo A., 
\newblock On some transformations of bilateral birth-and-death processes with applications
to first passage time evaluations.
\newblock {In \em Sita '94-Proc. 17th Symp. Inf. Theory Appl.}, 739--742.  Hiroshima (1994) 
available at http://arXiv.org/pdf/0803.1413 
 
\bibitem{DiCr1998}
Di Crescenzo A., 
\newblock First-passage-time densities and avoiding probabilities for birth and death processes 
with symmetric simple paths.
\newblock {\em J. Appl. Prob.}  35, 383--394 (1998)

\bibitem{DiCretal1995}
Di Crescenzo A., Giorno V., Nobile A.G.\ and  Ricciardi L.M., 
\newblock On a symmetry-based constructive approach to probability densities for 
two-dimensional diffusion processes. 
\newblock {\em J. Appl. Prob.} 32, 316--336 (1995) 

\bibitem{DiCretal1997}
Di Crescenzo A., Giorno V., Nobile A.G.\ and  Ricciardi L.M., 
\newblock On first-passage-time and transition densities for strongly symmetric diffusion processes. 
\newblock {\em Nagoya Math. J.} 145, 143--161 (1997) 

\bibitem{DiCrMa2009}
Di Crescenzo A.\ and Martinucci B., 
\newblock On a symmetric, nonlinear birth-death process with bimodal transition probabilities.
\newblock {\em Symmetry} 1, 201--214 (2009)
  
\bibitem{GiNo88}
Giorno V.\ and Nobile A.G., 
\newblock On the distribution of the range of an asymmetric random walk. 
\newblock {\em Ricerche Mat.} 37, 315--324  (1988) 

\bibitem{Gietal89} 
Giorno V., Nobile A.G.\ and  Ricciardi L.M., 
\newblock A symmetry-based constructive approach to probability densities for 
one-dimensional diffusion processes. 
\newblock {\em J. Appl. Prob.} 26, 707--721 (1989)

\bibitem{Gietal2010}
Giorno V., Nobile A.G.\ and Ricciardi L.M., 
\newblock On the densities of certain bounded diffusion processes. 
\newblock {\em Ricerche Mat.} (2010) DOI 10.1007/s11587-010-0097-2. 

\bibitem{IuMa2010}
Iuliano A.\ and Martinucci B., 
\newblock Transient analysis of a birth-death process with alternating rates. 
\newblock {\em Cybernetics and Systems 2010} (Trappl R., ed.),  187--191. 
Austrian Society for Cybernetic Studies, Vienna  (2010)  

\bibitem{Le2010}
Lente G., 
\newblock The role of stochastic models in interpreting the origins of biological chirality
\newblock {\em Symmetry}  2, 767--798 (2010)

\bibitem{PaLe2004}
Parthasarathy P.R.\ and Lenin R.B., 
\newblock {Birth and death process (BDP) models with applications--queueing, communication systems, 
chemical models, biological models: the state-of-the-art with a time-dependent perspective}. 
\newblock American Series in Mathematical and Management Sciences, vol. 51, 
American Sciences Press, Columbus (2004)

\bibitem{Po2001}
Pollett P.K., 
\newblock {Similar Markov chains}. 
\newblock {\em J. Appl. Prob.}  Probability, statistics and seismology, 38A, 53--65 (2001)

\bibitem{Ri86}
Ricciardi L.M., 
\newblock Stochastic population theory: birth and death processes.
\newblock {In \em Mathematical Ecology} (Hallam T.G.\ and Levin S.A., eds.) 
Biomathematics  17, 155--190. Springer (1986) 

\bibitem{RiSa87}
Ricciardi L.M.\ and Sato S.,  
\newblock On the range of a one-dimensional asymmetric random walk
\newblock {\em Ricerche Mat.} 36, 153--160 (1987)   
  
\bibitem{StGoNo1971}
Stockmayer W.H., Gobush W.\ and Norvich R., 
\newblock Local-jump models for chain dynamics.
\newblock {\em Pure Appl. Chem.} 26, 555--561 (1971)  

\bibitem{TaEl2002a}
Tarabia A.M.K.\ and El-Baz A.H., 
\newblock A new explicit solution for a chemical queue. 
\newblock {\em Math. Scientist} 27, 16--24 (2002) 

\bibitem{TaEl2002b}
Tarabia A.M.K.\ and El-Baz A.H., 
\newblock Analysis of the busy period of the chemical queue: a series approach. 
\newblock {\em Math. Scientist} 27, 108--116 (2002) 

\bibitem{Taetal2009}
Tarabia A.M.K., Takagi H.\ and El-Baz A.H., 
\newblock Transient solution of a non-empty chemical queueing system
\newblock {\em Math. Meth. Oper. Res.} 70, 77--98 (2009)
%
\end{thebibliography}
\end{document}